\documentclass[english,11pt]{amsart}
\usepackage{amsopn,amsthm,amsfonts,amsmath,amssymb,amscd}
\usepackage{babel}
\usepackage{nicematrix}
\usepackage{mathtools}
\usepackage{tikz}
\usetikzlibrary{arrows.meta}
\usetikzlibrary{calc}

\newtheorem{Theorem}{Theorem}[section]
\newtheorem{Lemma}[Theorem]{Lemma}
\newtheorem{Proposition}[Theorem]{Proposition}
\newtheorem{Corollary}[Theorem]{Corollary}
\newtheorem{Example}[Theorem]{Example}

\newtheorem{Def}[Theorem]{Definition}

\newcommand{\ZZ}{\mathbb{Z}}

\newcommand{\id}{{\rm id}}
\newcommand{\Aut}{{\mathrm Aut}}
\newcommand{\Inn}{{\mathrm Inn}}
\newcommand{\im}{{\rm im}}

\begin{document}

\title{The Cayley graph of a quandle}

\author{David Dol\v zan, Bogdana Oliynyk}

\address{D.~Dol\v zan:~Department of Mathematics, Faculty of Mathematics
and Physics, University of Ljubljana, Jadranska 19, SI-1000 Ljubljana, Slovenia, and Institute of Mathematics, Physics and Mechanics, Jadranska 19, SI-1000 Ljubljana, Slovenia; e-mail: 
david.dolzan@fmf.uni-lj.si}
\address{B.~Oliynyk; Faculty of Applied Mathematics, Silesian University of Technology, Kaszubska 23, 44-100, Gliwice, Poland. e-mail: bogdana.oliynyk@polsl.pl}

\subjclass[2020]{05C20, 05C25, 57K12} 
\keywords{Cayley graph; quandle; connected component; regular graph}

\thanks{The first author acknowledges the financial support from the Slovenian Research Agency  (research core funding No. P1-0222)}

\begin{abstract}
In this paper, we investigate structural properties of the Cayley graph of a quandle and describe this graph for several important classes of quandles, including conjugation, Takasaki, dihedral, and Alexander quandles. In particular, we prove that for an Alexander quandle $A_t(G)$ over a finite abelian group $G$, the connected components of the Cayley graph correspond to the cosets of the subgroup $\mathrm{im}(\mathrm{id}-t)$. We also show that the Cayley graphs of generalized Alexander quandles are regular. When the defining automorphism is inner, we give an explicit description of the forward orbits and prove that the connected components correspond to cosets of the subgroup generated by commutators with the defining element.
\end{abstract}

\maketitle 

 \section{Introduction}

\bigskip

Quandles are algebraic structures designed to encode the formal properties of conjugation in groups and, simultaneously, the Reidemeister moves in knot theory. Since their introduction, they have become a central tool in the algebraic study of knots and links, giving rise to computable invariants such as coloring invariants, cocycle invariants, and homological constructions. Beyond knot theory, quandles appear naturally in group theory, Hopf algebraic settings, set-theoretic solutions of the Yang--Baxter equation, and the study of symmetric spaces.

Historically, the algebraic structure underlying knot colorings can be traced back to the work of Joyce and Matveev in the early 1980s. Joyce introduced the term \emph{quandle} and developed the fundamental quandle of a knot as a complete invariant up to orientation reversal in a suitable sense \cite{Joyce82}. Independently, Matveev introduced essentially the same structure under the name \emph{distributive groupoid} (or \emph{kei} in the involutory case) \cite{Matveev82}. The connection to classical knot invariants is strong: the fundamental quandle can be viewed as an analogue of the fundamental group, with the advantage that its defining relations mirror the Reidemeister moves directly.

A key feature of quandle theory is that it provides an algebraic framework in which the arcs of a knot diagram may be colored by elements of a fixed quandle, with the crossing relations corresponding exactly to the quandle operation. This leads to a simple and computable knot invariant: the number of quandle colorings of a diagram is invariant under Reidemeister moves, hence is a knot invariant. These ideas have been expanded substantially, particularly through the development of quandle homology and quandle cohomology, culminating in state-sum cocycle invariants introduced by Carter, Jelsovsky, Kamada, Langford, and Saito \cite{CarterEtAl99,CarterEtAl03}. For general background on racks and quandles and their applications to knot invariants, see the monographs and surveys \cite{ElhamdadiNelson15,Kamanda17}.

We recall the standard definitions and fix notation. 

\begin{Def}
A \emph{quandle} is a set $Q$ equipped with a binary operation
\[
\rhd \colon Q \times Q \to Q,\qquad (x,y)\mapsto x \rhd y,
\]
such that for all $x,y,z \in Q$:
\begin{enumerate}
  \item \textbf{Idempotency:} $x \rhd x = x$.
  \item \textbf{Right invertibility:} for each $y\in Q$, the map $R_y\colon Q\to Q$ defined by $R_y(x)=x\rhd y$ is a bijection.
  \item \textbf{Self-distributivity:} $(x\rhd y)\rhd z = (x\rhd z)\rhd (y\rhd z)$.
\end{enumerate}
\end{Def}

If only axioms (2) and (3) are required, one obtains a \emph{rack}. Quandles can thus be viewed as idempotent racks. The bijections $R_y$ are called the \emph{right translations} of the quandle. Axiom (3) is frequently called \emph{right self-distributivity} and is the algebraic shadow of the third Reidemeister move.

A quandle $Q$ is called \emph{involutory} (or a \emph{kei}) if in addition
\[
(x \rhd y)\rhd y = x \qquad \text{for all } x,y\in Q.
\]

Cayley graphs provide a standard bridge between algebraic structure and combinatorial geometry. 
They were introduced by Cayley in the context of groups as a way to represent algebraic relations by a directed graph, and they have since become a fundamental tool in combinatorial and geometric group theory; see, for example, \cite{Cayley78,deLaHarpe00,GodsilRoyle01}. 
In the classical setting, the Cayley graph of a group $G$ with respect to a generating set $S$ encodes the action of $S$ by right multiplication, and many algebraic properties of $G$ become visible as graph-theoretic features of the resulting regular graph.

This construction extends naturally to other algebraic systems whose operations determine families of permutations. 
In particular, properties of the labeled Cayley digraphs of racks and quandles were investigated in \cite{LucTa}.

In this paper, we define the Cayley graph of a quandle as follows.

\begin{Def}
Let $Q$ be a quandle. The \emph{(right) Cayley graph} of $Q$ is the directed graph $\Gamma_Q$ defined as follows:
\begin{itemize}
  \item the vertex set is $V(\Gamma_Q) = Q$;
  \item for each $x,y \in Q$ there is a directed edge
  \[
  x \rightarrow x \rhd y.
  \]
\end{itemize}
\end{Def}

Cayley graphs allow one to phrase structural properties of quandles in purely graph-theoretic language. 
The structure of the paper is as follows. In Section~2, we recall the basic definitions and present several fundamental examples of quandles that will be used throughout the paper. In Section~3, we investigate structural properties of the Cayley graph of a quandle and describe its form for several important classes of quandles, including conjugation quandles, Takasaki quandles, dihedral quandles, and Alexander quandles. In particular, we prove that for an Alexander quandle $A_t(G)$ over a finite abelian group $G$, the connected components of the Cayley graph correspond to the cosets of the subgroup $\mathrm{im}(\mathrm{id}-t)$. In Section~4, we study Cayley graphs of generalized Alexander quandles. We show that these graphs are regular and, when the defining automorphism is inner, we obtain an explicit description of the forward orbits and prove that the connected components correspond to cosets of the subgroup generated by commutators with the defining element.

\medskip

\bigskip

 \section{Definitions and examples}
\bigskip

We begin with several important examples of quandles.
\begin{Example}[Trivial quandle]
Let $X$ be a nonempty set. Define a binary operation
\[
\rhd \colon X \times X \to X
\]
by
\[
x \rhd y = x \qquad \text{for all } x,y \in X.
\]
Then $(X,\rhd)$ is a quandle, called the \emph{trivial quandle} on $X$.
\end{Example}

Quandles arise naturally from group theory. Let $G$ be a group.

\begin{Example}[Conjugation quandle]
The set $G$ becomes a quandle with operation
\[
x \rhd y = y^{-1}xy.
\]
Idempotency follows from $x \rhd x = x^{-1}xx = x$, right invertibility follows because conjugation is an automorphism, and self-distributivity follows from the homomorphism property of conjugation. Any union of conjugacy classes in $G$ is a subquandle.
\end{Example}

\begin{Example}[Core quandle]
The set $G$ becomes a quandle with operation
\[
x \rhd y = yx^{-1}y.
\]
This quandle is involutory. It is sometimes called the \emph{core} of the group.

In case $G$ is abelian, this quandle is often called the \emph{Takasaki} quandle.
\end{Example}

\begin{Example}[Alexander quandle]
Let $G$ be an additive abelian group and $t \in \Aut(G)$. Then $G$ is a quandle under
\[
x \rhd y = t(x) + (\id_G-t)(y).
\]
This is the prototypical example of an affine quandle and plays an important role in the relationship between quandle invariants and classical Alexander-type invariants. Note that if $t(x)=-x$, then the Alexander quandle is exactly the Takasaki quandle. We denote the Alexander quandle  with respect to $t$ by $A_t(G)$.  
\end{Example}

\begin{Example}[Generalized Alexander quandle]
The preceding example can be generalized. Let $G$ be a group and $\varphi \in \Aut(G)$. Then $G$ is a quandle under
\[
x \rhd y = \varphi(xy^{-1})y.
\]
This is called a generalized Alexander quandle of $G$ with respect to $\varphi$. When 
$G$ is abelian, this construction coincides with the usual Alexander quandle.
\end{Example}

\begin{Example}[Dihedral quandle]
For $n\ge 2$, the set $\mathbb{Z}/n\mathbb{Z}$ with operation
\[
a \rhd b = 2b-a
\]
is an involutory quandle, denoted $R_n$. It corresponds to reflections in the dihedral group and yields the classical Fox $n$-colorings of knots.
\end{Example}

The next lemma is immediate.

\begin{Lemma}
   Let $Q$ be a quandle and $x \in Q$. Then there is a loop at $x$ in $\Gamma_Q$.
\end{Lemma}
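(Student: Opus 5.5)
The plan is to read the loop directly off the definition of $\Gamma_Q$ together with the idempotency axiom. By the definition of the Cayley graph, for every pair $x,y \in Q$ there is a directed edge $x \rightarrow x \rhd y$. A loop at $x$ is an edge from $x$ to itself. So it suffices to find some $y \in Q$ with $x \rhd y = x$.

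The natural choice is $y = x$. Idempotency, axiom (1) of a quandle, gives $x \rhd x = x$. Hence the edge $x \rightarrow x \rhd x$ supplied by the definition is exactly the edge $x \rightarrow x$, which is a loop at $x$. Since $x$ was arbitrary, every vertex of $\Gamma_Q$ carries a loop.

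There is no real obstacle; the argument is a one-line check. The only point worth noting is that idempotency is what is used, which distinguishes quandles from racks. In a general rack there need not be any $y$ with $x \rhd y = x$, although in many examples, such as trivial quandles, every $y$ works. I would also remark that in concrete examples further loops may arise from other $y$. In the Alexander quandle $A_t(G)$, for instance, every $y$ with $(\id_G - t)(y - x) = 0$ gives a loop at $x$. This is not needed for the lemma.
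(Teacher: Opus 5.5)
Your proof is correct and is exactly the argument the paper intends: the paper calls the lemma immediate and gives no written proof, and the reason is that idempotency $x \rhd x = x$ turns the edge $x \rightarrow x \rhd x$ into a loop at $x$. Your side remarks are also correct but not needed: a rack need not have any loop, and in $A_t(G)$ one has $x \rhd y = x$ exactly when $(\id_G - t)(y-x)=0$.
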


Throughout the paper, all graphs are directed graphs with loops and without multiple edges. Thus, for any ordered pair $(a,b) \in V(\Gamma)\times V(\Gamma)$, there is at most one edge
from $a$ to $b$. Accordingly, we define a \emph{graph} $\Gamma$ as an ordered pair $\Gamma = (V(\Gamma), E(\Gamma))$, where $V(\Gamma)$ is the set of vertices and $E(\Gamma)$ is the set of edges (or arcs),
with $E(\Gamma) \subseteq V(\Gamma) \times V(\Gamma).$
Each edge $e \in E(\Gamma)$ starts at a vertex denoted by $i(e) \in V(\Gamma)$ and terminates at a vertex denoted by $t(e) \in V(\Gamma)$.
If $i(e) = t(e)$, then $e$ is called a \emph{loop} at the vertex $i(e)$. In particular, a loop at a vertex $a \in V(\Gamma)$ is identified with the pair $(a,a)$.

We extend the standard definition of simple graphs to the case of directed graphs.
We say that a directed graph $\Gamma$ is \emph{complete} if, for any two distinct vertices
$a,b \in V(\Gamma)$, there are directed edges from $a$ to $b$ and from $b$ to $a$ in $E(\Gamma)$, and if there is exactly one loop at each vertex of $\Gamma$. Analogously to the undirected case, we denote the complete directed graph with $n$ vertices by $K_n$.

We say that a directed graph $\Gamma$ is \emph{edgeless} if the edge set $E(\Gamma)$
consists only of loops at vertices, that is,
\[
E(\Gamma)=\{aa : a \in V(\Gamma)\}.
\] 

A directed graph $\Gamma$ is called \emph{symmetric} if, for all $a,c\in V(\Gamma)$, whenever there is an edge from $a$ to $c$, there is also an edge from $c$ to $a$. Throughout the paper, a directed graph is called connected if for every ordered pair of vertices $(u,v)$ there exists a directed path from $u$ to $v$.

We refer the reader to \cite{Diestel} for general background and for all undefined notions on graph theory used in the paper.

\bigskip

 \section{Some properties of Cayley graphs}
\bigskip

In this section, we investigate structural properties of the Cayley graph of a quandle and describe this graph for several important classes of quandles.
We start with the following simple lemma.

\begin{Lemma}
   Let $Q$ be a quandle. Then $\Gamma_Q$ is edgeless if and only if $Q$ is a trivial quandle.
\end{Lemma}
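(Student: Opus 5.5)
The argument just unfolds the definitions on both sides; the only thing to get right is that the paper treats $E(\Gamma_Q)$ as a set of ordered pairs, so parallel edges collapse. By the definition of $\Gamma_Q$,
\[
E(\Gamma_Q)=\{(x,\,x\rhd y) : x,y\in Q\}.
\]
Each vertex $x$ carries the loop $(x,x)$ coming from $y=x$, by idempotency; this is the earlier lemma on loops. Hence $E(\Gamma_Q)\supseteq\{(a,a): a\in Q\}$ always holds. Edgelessness, meaning equality of these two sets, is therefore equivalent to the statement that every edge $(x,x\rhd y)$ is a loop.

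For the forward direction, assume $\Gamma_Q$ is edgeless. Take arbitrary $x,y\in Q$. The pair $(x,x\rhd y)$ lies in $E(\Gamma_Q)$, so it must be a loop, and thus $x\rhd y=x$. Since $x$ and $y$ were arbitrary, $\rhd$ is the trivial operation and $Q$ is the trivial quandle.

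For the converse, assume $Q$ is trivial, so $x\rhd y=x$ for all $x,y\in Q$. Then every edge has the form $(x,x)$, which gives $E(\Gamma_Q)\subseteq\{(a,a):a\in Q\}$. Combined with the reverse inclusion from the loop lemma, we get equality, so $\Gamma_Q$ is edgeless.

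I expect no real obstacle here. The only subtlety is the convention just mentioned: the single loop at $x$ may arise from several $y$, and because edges form a set this still counts as exactly one loop.
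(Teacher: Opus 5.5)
Your proof is correct and is the same definition-unfolding argument as the paper's one-line proof: $Q$ is trivial exactly when $x \rhd y = x$ for all $x,y$, which is exactly the condition that every edge $(x, x\rhd y)$ is a loop. Your use of idempotency to secure the inclusion $\{(a,a) : a \in Q\} \subseteq E(\Gamma_Q)$, which the equality in the definition of edgeless requires, spells out a step the paper leaves implicit.
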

\begin{proof}
   $Q$ is a trivial quandle if and only if $x \rhd y = x \, \text{for all } x,y \in Q$, which is equivalent to saying that the only edges of $\Gamma_Q$ are the loops.
\end{proof}

\begin{Proposition}
 \label{conj}
Let $G$ be a group, and let $\mathrm{Conj}(G)$ denote the conjugation quandle of $G$. Then the following statements hold:
\begin{enumerate}
    \item 
    \label{conj1}
    The graph $\Gamma_{\mathrm{Conj}(G)}$ is symmetric.

    \item
    The connected components of $\Gamma_{\mathrm{Conj}(G)}$ are complete subgraphs induced by the conjugacy classes of $G$. 
        
\end{enumerate}
\end{Proposition}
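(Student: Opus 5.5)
The plan is to unwind the definition of edges in the conjugation quandle and read off both statements directly. In $\Gamma_{\mathrm{Conj}(G)}$ there is an edge $x \to x \rhd y = y^{-1}xy$ for every $y \in G$. So the out-neighbourhood of a vertex $x$ is exactly its conjugacy class $x^G=\{y^{-1}xy : y\in G\}$, and this set contains $x$ itself through the loop.

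For part (\ref{conj1}), suppose there is an edge $a \to c$. Then $c = y^{-1}ay$ for some $y\in G$, so $a = (y^{-1})^{-1} c\, y^{-1} = c \rhd y^{-1}$, which gives an edge $c \to a$. This is just the observation that conjugacy is symmetric, or equivalently that $R_y^{-1}=R_{y^{-1}}$ in this quandle.

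For part (2), I would first show that every directed path stays inside a single conjugacy class. Each edge goes from $x$ to a conjugate of $x$, so by induction on the length of the path, and by transitivity of conjugacy, every vertex reachable from $x$ lies in $x^G$. Conversely, any two distinct elements $a,c$ of the same class satisfy $c=y^{-1}ay$ for some $y$, so there is a direct edge $a\to c$ and, by part (\ref{conj1}), also $c\to a$. Hence each conjugacy class is strongly connected and is a maximal such set, so the classes are exactly the connected components. The induced subgraph on a class has all edges between distinct vertices in both directions. It also has exactly one loop at each vertex, using the convention that $E\subseteq V\times V$ has no multiple edges, so several $y$ giving the same pair $(x,x)$ still give a single loop. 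By the definition of complete directed graph, the induced subgraph is $K_{|x^G|}$.

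I do not expect any real obstacle. The only point needing care is the convention that edges form a set, so the loop at $x$ is unique even though every $y$ in the centralizer of $x$ produces it. This is what makes the induced subgraph match the paper's definition of $K_n$ exactly.
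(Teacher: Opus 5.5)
Your proposal is correct and follows essentially the same route as the paper: you get symmetry from $a = c \rhd y^{-1}$, then show that edges never leave a conjugacy class while any two conjugates are joined directly, so each class is a complete component. Your extra remarks, the induction on path length and the uniqueness of the loop because $E\subseteq V\times V$, only make explicit what the paper leaves implicit.
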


\begin{proof}
{\bf 1.} Let $ac$ be an edge of the graph $\Gamma_{\mathrm{Conj}(G)}$.
By the definition of the Cayley graph and the conjugation quandle,
this means that there exists an element $b \in G$ such that $a \rhd b = c$, that is, $c=b^{-1}ab$. Equivalently,  $$a=bcb^{-1}=(b^{-1})^{-1}cb^{-1}.$$ Hence there is also an edge $ca \in E(\Gamma_{\mathrm{Conj}(G)})$ and therefore the graph   $\Gamma_{\mathrm{Conj}(G)}$ is symmetric. 

{\bf 2.} If there is an edge $ac$ in the graph $\Gamma_{\mathrm{Conj}(G)}$, then there exists
an element $b \in G$ such that $a \rhd b = c$, that is, $c = b^{-1}ab$. Hence, the elements $a$ and $c$ belong to the same conjugacy class of $G$.

Now let $a$ and $c$ be conjugate elements of $G$, that is, there exists an element $b \in G$ such that $c = b^{-1}ab$.
By the definition of the conjugation quandle, we have $a \rhd b = c$.
Therefore, there is an edge $ac \in E(\Gamma_{\mathrm{Conj}(G)})$.
Moreover, for any two conjugate elements $a$ and $c$ of the group $G$,
there is an edge between them in $\Gamma_{\mathrm{Conj}(G)}$.
Therefore, each conjugacy class induces a complete subgraph and these are precisely the connected components.
\end{proof}

\begin{Proposition}
\label{Takasaki}
  Let $T(\mathbb{Z})$ be the Takasaki quandle of the group  $\mathbb{Z}$. Then the following statements hold:
\begin{enumerate}
\item 
 The graph $\Gamma_{T(\mathbb{Z})}$ is symmetric.
 \item 
The graph  $\Gamma_{T(\mathbb{Z})}$ has two connected components, each isomorphic to the complete directed graph on a countably infinite vertex set, and for any $a,c \in T(\mathbb{Z})$ there is an edge from $a$ to $c$ if and only if $c+a \equiv 0 \pmod 2.$
\end{enumerate} 
\end{Proposition}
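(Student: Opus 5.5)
The plan is to work directly from the Takasaki operation on $\mathbb{Z}$, namely $a \rhd b = 2b - a$, and to describe the edge set of $\Gamma_{T(\mathbb{Z})}$ explicitly. Everything else then follows from that description.

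\textbf{Edge characterization.} I will first prove the parity criterion in part (2): there is an edge $a \to c$ if and only if $c + a \equiv 0 \pmod 2$.

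If $a \to c$ is an edge, then $c = 2b - a$ for some $b \in \mathbb{Z}$. Hence $c + a = 2b$ is even.

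Conversely, suppose $c + a$ is even. Put $b = (a+c)/2 \in \mathbb{Z}$. Then $a \rhd b = 2b - a = c$, so $a \to c$ is an edge.

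This is the only place where a witness must be constructed, and it is immediate.

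\textbf{Symmetry.} Part (1) follows at once, because the condition $c + a \equiv 0 \pmod 2$ is symmetric in $a$ and $c$. Alternatively, one can imitate the argument of Proposition~\ref{conj}(1) using involutivity: if $c = a \rhd b$, then $c \rhd b = a$.

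\textbf{Components.} The condition $a + c$ even is equivalent to $a \equiv c \pmod 2$. So edges join precisely pairs of integers of the same parity. It follows that:
\begin{itemize}
\item there is no edge between an even and an odd integer, so no directed path crosses between the two parity classes;
\item within each parity class, any two distinct vertices are joined by edges in both directions;
\item each vertex carries exactly one loop, since $a + a$ is even.
\end{itemize}
Because the graph has no multiple edges by convention, each parity class, $2\mathbb{Z}$ and $2\mathbb{Z}+1$, induces a complete directed graph in the paper's sense. Each class is countably infinite. The two classes are exactly the connected components, so $\Gamma_{T(\mathbb{Z})}$ has exactly two components, each isomorphic to the complete directed graph on a countably infinite vertex set.

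\textbf{Main obstacle.} There is no real obstacle. The only care needed is to match the paper's definition of ``complete'' (exactly one loop at each vertex) and to note that the two components are isomorphic, for instance via the bijection $x \mapsto x+1$.
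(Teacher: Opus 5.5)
Your proposal is correct and follows essentially the same route as the paper. The paper also reduces everything to the equation $c+a=2b$: it gets symmetry from $c=2b-a \Leftrightarrow a=2b-c$ (your alternative involutivity argument), and it gets the two components from the parity of $c+a$. Your explicit check of the one-loop condition in the paper's definition of ``complete'' and your isomorphism $x\mapsto x+1$ between the components are small details the paper leaves implicit.
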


\begin{proof}
{\bf 1.} For $T(\mathbb Z)$ we have $a \rhd b = 2b-a$.
If $a \rhd b = c$, then $c=2b-a$, hence $a=2b-c$, i.e. $c \rhd b = a$.
Thus $\Gamma_{T(\mathbb Z)}$ is symmetric. 

{\bf 2.}
Let $a,c, b \in \mathbb{Z}$. For the Takasaki quandle $T(\mathbb{Z})$ the equality $a \rhd b = c$ means $c=2b-a$ or 
\begin{equation}
\label{takeq}
c+a=2b
\end{equation}
If $a$ and $c$ have the same parity, then for any $a$ and $c$ there exists exactly one $b \in \mathbb{Z}$ such that the equality \eqref{takeq} holds. So, there are edges $ca$ and $ac$ in the graph $\Gamma_{T(\mathbb{Z})}$. If $a$ is odd and $c$ is even or vice versa, such $b$ does not exist. This completes the proof of the proposition.
\end{proof}

\begin{Proposition}
\label{Prim_Dihed}
  Let $R_n$ be a dihedral quandle. Then the following statements hold:
\begin{enumerate}
\item 
 The graph $\Gamma_{R_n}$ is symmetric.
 \item 
If $n$ is odd, then $\Gamma_{R_n}$ is isomorphic to the complete graph $K_n$.
\item 
If $n$ is even, then $\Gamma_{R_n}$ has two connected components, each isomorphic to the complete graph $K_{m}$, where $m=\frac{n}{2}$.
\end{enumerate} 
\end{Proposition}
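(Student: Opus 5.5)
We follow the pattern of the proof of Proposition~\ref{Takasaki}, now working in $\mathbb{Z}/n\mathbb{Z}$. The whole argument reduces to one observation. For $a,c\in R_n$ there is an edge from $a$ to $c$ in $\Gamma_{R_n}$ if and only if the congruence
\[
2b \equiv a+c \pmod n
\]
has a solution $b\in\mathbb{Z}/n\mathbb{Z}$. Each statement then follows by deciding when this linear congruence is solvable.

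For (1), symmetry is immediate. If $c=2b-a$, then $a=2b-c$, that is, $c\rhd b=a$, exactly as in part 1 of Proposition~\ref{Takasaki}.

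For (2), let $n$ be odd. Then $2$ is invertible modulo $n$, so for any $a,c$ the element $b=2^{-1}(a+c)$ gives $a\rhd b=c$. Hence there is an edge from every vertex to every vertex. Since the graph has no multiple edges, this includes exactly one loop at each vertex, so $\Gamma_{R_n}\cong K_n$.

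For (3), let $n=2m$ be even.
\begin{itemize}
\item Since $n$ is even, reduction modulo $2$ is well defined on $\mathbb{Z}/n\mathbb{Z}$, so parity of residues makes sense.
\item The congruence $2b\equiv a+c \pmod{2m}$ is solvable if and only if $\gcd(2,2m)=2$ divides $a+c$, that is, if and only if $a$ and $c$ have the same parity.
\item So edges exist exactly between elements of the same parity class, in both directions, and never between the two classes.
\item Each parity class (the even residues and the odd residues) has $m$ elements. It therefore induces a complete directed graph $K_m$ and forms one connected component.
\end{itemize}

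There is no real obstacle here. The only point needing care is the solvability criterion for $2b\equiv a+c$ modulo an even $n$, together with checking that parity is well defined modulo $n$, which holds because $n$ is even.
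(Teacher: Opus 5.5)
Your proof is correct and follows the paper's argument essentially step for step. Both reduce the existence of an edge $a\to c$ to solvability of $2b\equiv a+c \pmod n$, then use invertibility of $2$ for odd $n$ and the parity criterion for even $n$. Your remarks that parity is well defined modulo an even $n$ and that each parity class has exactly $m$ elements fill in details the paper leaves implicit.
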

\begin{proof}
{\bf 1.} For the dihedral quandle $R_n$, the equality $a \rhd b = c$ means that 
$c = 2b - a  \pmod n$ or, equivalently, $c + a = 2b \pmod n$. 
If such an element $b$ exists, then 
$a = 2b - c  \pmod n$, that is, $c \rhd b = a$. 
Thus, the graph $\Gamma_{R_n}$ is symmetric.

{\bf 2.} If $n$ is odd, the element $2^{-1}$ exists, and hence for any 
$a, c \in \mathbb{Z}_n$ we can find $b$ such that $a \rhd b = c$. 
This means that any $a$ and $c$ are adjacent in $\Gamma_{R_n}$. 
Therefore, $\Gamma_{R_n}$ is isomorphic to the complete graph $K_n$.

{\bf 3.} If $n$ is even, a solution of the equation 
$c + a = 2b  \pmod n$ exists if and only if $c + a$ is even. 
Therefore, $\Gamma_{R_n}$ has two connected components, each of which is isomorphic to the complete graph $K_m$, where $m = \frac{n}{2}$.
\end{proof}

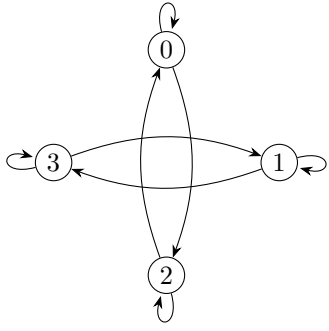
\begin{figure}[h]
\centering
\begin{tikzpicture}[>=Stealth,
  every node/.style={circle, draw, inner sep=2pt, font=\small}
]

\node (0) at (0, 1.5) {$0$};
\node (1) at (1.5, 0) {$1$};
\node (2) at (0,-1.5) {$2$};
\node (3) at (-1.5,0) {$3$};

\draw[->] (0) edge[loop above] (0);
\draw[->] (2) edge[loop below] (2);
\draw[->] (1) to[bend left=20] (3);
\draw[->] (3) to[bend left=20] (1);

\draw[->] (1) edge[loop right] (1);
\draw[->] (3) edge[loop left]  (3);
\draw[->] (0) to[bend left=20] (2);
\draw[->] (2) to[bend left=20] (0);

\end{tikzpicture}

\caption{Cayley graph of the dihedral quandle $R_4$.
}

\end{figure}

\begin{Theorem}
\label{alexanderiso}
 Let $G$ be a finite abelian group.  Then the connected components of the Cayley graph  $\Gamma_{A_t(G)}$ (of an Alexander quandle $A_t(G)$) are complete graphs induced by the cosets of $\im(\id-t)$. Consequently, $\Gamma_{A_t(G)}$ is a disjoint union of $\frac{|G|}{|\im(\id-t)|}$ copies of a complete graph on $|\im(\id-t)|$ vertices.
\end{Theorem}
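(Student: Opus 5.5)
Set $H=\im(\id-t)$, which is a subgroup of $G$ because $\id-t$ is a group endomorphism of the abelian group $G$. I will describe the edge set of $\Gamma_{A_t(G)}$ explicitly. By the definition of the Alexander quandle, there is an edge $a\to c$ exactly when some $b\in G$ satisfies
\[
c=t(a)+(\id-t)(b).
\]
Rewrite this as $c-a=(t-\id)(a)+(\id-t)(b)=(\id-t)(b-a)$. So the plan is to show that there is an edge $a\to c$ if and only if $c-a\in H$, that is, if and only if $a$ and $c$ lie in the same coset of $H$.

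\textbf{Step 1 (edges stay in cosets).} If $a\to c$ is an edge, then $c-a=(\id-t)(b-a)\in H$, so $a+H=c+H$.

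\textbf{Step 2 (cosets are complete).} Conversely, suppose $c-a\in H$, say $c-a=(\id-t)(d)$. Put $b=a+d$. Then $a\rhd b=t(a)+(\id-t)(a)+(\id-t)(d)=a+(c-a)=c$. So every ordered pair in a coset, including $a=c$, is joined by an edge. Together with the loops of the earlier Lemma, each coset therefore induces a complete directed graph. This two-way equivalence is the core of the argument, and it is nothing more than the substitution $b\mapsto b-a$.

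\textbf{Step 3 (components and counting).} By Step 1 there are no edges between distinct cosets, so no directed path can leave a coset. By Step 2 each coset is strongly connected. Hence the connected components are exactly the subgraphs induced by the cosets of $H$, and each is a copy of $K_{|H|}$. Finiteness of $G$ is used only here: Lagrange's theorem gives $|G|/|H|$ cosets, hence that many copies. I expect no real obstacle; the only point to check is that the affine translation by $(t-\id)(a)$ is absorbed correctly, which Step 2's rewriting handles.
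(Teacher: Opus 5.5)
Your proof is correct and follows essentially the same route as the paper. Both use the rewriting $a\rhd b=a+(\id-t)(b-a)$ to show that edges stay inside cosets of $\im(\id-t)$, and the substitution $b=a+d$ to realize every pair within a coset as an edge, from which the component structure and the count $|G|/|\im(\id-t)|$ follow.
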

\begin{proof}
    Choose any $a \in G$. We have to show that there is an edge from $a$ to $a+c$ for every $c \in \im(\id-t)$. However, for any $c \in \im(\id-t)$ there exists $d \in G$ such that $c=d-t(d)$, so $a+c=a+d-t(d)=t(a)+a+d-t(a+d)= t(a)+(\id-t)(a+d) = a \rhd (a+d)$, so, by definition of the Cayley graph, there is an edge from $a$ to $a+c$.
    
    On the other hand, suppose that there is an edge from $a$ to $x$ in $\Gamma_{A_t(G)}$.
    Then there exists $b \in G$ such that $x=a \rhd b=t(a)+b-t(b)$. Thus, $x=a+(\id-t)(b-a) \in a + \im(\id-t)$.
    
    Therefore, if $u,v$ are in the same coset $a+\im(\id-t)$, then 
$u-v \in \im(\id-t)$, so by the first part there is an edge 
$uv$ in $\Gamma_{A_t(G)}$. Hence, the induced subgraph on the coset is complete. Since every edge from a vertex $a$ ends in the coset $a+\im(\id-t)$, no directed path can leave this coset. Hence the connected components are precisely the cosets of $\im(\id-t)$.
\end{proof}

The next statement follows directly from  Theorem \ref{alexanderiso}.

\begin{Corollary}
    Let $G$ be a finite abelian group and let $t_1$  and $t_2$ be automorphisms of $G$. Then the Cayley graphs of $A_{t_1}(G)$ and $A_{t_2}(G)$ are isomorphic if and only if $|\im(\id-t_1)|=|\im(\id-t_2)|.$
\end{Corollary}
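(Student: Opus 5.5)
The corollary follows from Theorem~\ref{alexanderiso}. Write $H_i=\im(\id-t_i)$ for $i=1,2$. By the theorem, $\Gamma_{A_{t_i}(G)}$ is a disjoint union of $|G|/|H_i|$ copies of the complete directed graph $K_{|H_i|}$ (complete in the sense of the paper, so with exactly one loop at each vertex). The task is to show that two such graphs are isomorphic exactly when $|H_1|=|H_2|$.

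\textbf{Sufficiency.} Suppose $|H_1|=|H_2|=m$. Then both graphs are disjoint unions of $|G|/m$ copies of $K_m$. To build an isomorphism, list the cosets of $H_1$ and of $H_2$ and pair them off; there are equally many of each. For each pair, choose any bijection between the two cosets. Since both induced subgraphs are complete directed graphs with a loop at every vertex, each such bijection preserves edges in both directions. Gluing these bijections gives a bijection $G\to G$. It preserves edges and non-edges, because both graphs have edges only inside components.

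\textbf{Necessity.} Suppose $\Gamma_{A_{t_1}(G)}\cong\Gamma_{A_{t_2}(G)}$. A graph isomorphism maps connected components bijectively onto connected components and preserves their sizes. By Theorem~\ref{alexanderiso}, every component of $\Gamma_{A_{t_i}(G)}$ has exactly $|H_i|$ vertices. Hence $|H_1|=|H_2|$.

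The only point needing any care is the notion of connected component for directed graphs. The paper defines connectedness via directed paths in both directions, so components here are strongly connected components. These are preserved by isomorphisms of directed graphs, and the theorem identifies them with the cosets. I expect no real obstacle beyond this observation.
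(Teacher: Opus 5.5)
Your proposal is correct and follows the paper's approach. The paper gives no separate proof and only remarks that the corollary follows directly from Theorem~\ref{alexanderiso}; you have written out the two directions it leaves implicit, namely matching cosets by arbitrary bijections for sufficiency and comparing component sizes for necessity.
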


The next example shows that two different Alexander quandles $A_{t_1}(G)$ and $A_{t_2}(G)$ can have isomorphic Cayley graphs, but the subgroups $\im(\id-t_1)$ and $\im(\id-t_2)$ need not be isomorphic.

\begin{Example}
    Let $G=\ZZ_4 \times \ZZ_4$, $t_1(x,y)=(y,3x+2y)$, and $t_2(x,y)=(x+2y,2x+y)$. Observe that $\im(\id-t_1)=\{(0,0),(1,1),(2,2),(3,3)\}$ and $\im(\id-t_2)=\{(0,0),(2,2),(0,2),(2,0)\}$ are nonisomorphic as abelian groups. However, the Cayley graphs of the corresponding Alexander quandles are nonetheless isomorphic by Theorem \ref{alexanderiso}. 
    
    Note also that the two Alexander quandles themselves are also nonisomorphic, since $t_1^2=\id$ and $t_2^2 \neq \id$.
\end{Example}

\bigskip

 \section{The Cayley graph of a generalized Alexander quandle}
\bigskip

In this section, we study the Cayley graph of a generalized Alexander quandle.
We start with the following lemma.

\begin{Lemma}
\label{fixedpts}
   Let $G$ be a group, $\varphi \in \Aut(G)$ and let $A_\varphi(G)$ denote the generalized Alexander quandle of $G$ with respect to $\varphi$. Then for any $x \in G$, we have $x \rhd y = x \rhd z$ for some $y,z \in G$ if and only if $\varphi(yz^{-1})=yz^{-1}$.
\end{Lemma}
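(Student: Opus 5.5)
The plan is a direct computation: unfold the operation of the generalized Alexander quandle and cancel the factor that depends on $x$. Since $\varphi$ is a homomorphism, we have
\[
x \rhd y = \varphi(xy^{-1})y = \varphi(x)\,\varphi(y)^{-1}\,y .
\]
So the equality $x \rhd y = x \rhd z$ reads $\varphi(x)\varphi(y)^{-1}y = \varphi(x)\varphi(z)^{-1}z$. Multiplying on the left by $\varphi(x)^{-1}$ shows it is equivalent to
\[
\varphi(y)^{-1}y = \varphi(z)^{-1}z .
\]
In particular, the condition does not depend on $x$ at all. Every step here is reversible, so we obtain a chain of equivalences.

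Next I will rearrange this into a fixed-point condition. Multiplying on the left by $\varphi(z)$ and on the right by $y^{-1}$ gives $\varphi(z)\varphi(y)^{-1} = zy^{-1}$. Using again that $\varphi$ is a homomorphism, this is $\varphi(zy^{-1}) = zy^{-1}$.

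Finally, $zy^{-1} = (yz^{-1})^{-1}$, and $\varphi$ commutes with inversion. Hence $zy^{-1}$ is fixed by $\varphi$ if and only if $yz^{-1}$ is. This yields exactly $\varphi(yz^{-1}) = yz^{-1}$, as stated.

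No step is a genuine obstacle. The only care needed is with the order of factors in the nonabelian setting: multiply on the correct sides, and make the final passage from $zy^{-1}$ to its inverse $yz^{-1}$ explicitly.
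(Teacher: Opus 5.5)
Your proof is correct and follows the paper's argument: expand $x \rhd y$ using that $\varphi$ is a homomorphism, cancel $\varphi(x)$ on the left, and rearrange into a fixed-point condition. The only cosmetic difference is that the paper reaches $\varphi(yz^{-1})=yz^{-1}$ directly and writes the two implications separately, whereas you pass through $\varphi(zy^{-1})=zy^{-1}$, then invert, and get both directions at once from a chain of equivalences.
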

\begin{proof}
    Suppose we have $x \rhd y = x \rhd z$. Then $\varphi(xy^{-1})y=\varphi(xz^{-1})z$, so using the multiplicativity of $\varphi$, we get $\varphi(x)\varphi(y^{-1})y=\varphi(x)\varphi(z^{-1})z$. By canceling $\varphi(x)$ we arrive at $yz^{-1}=\varphi(y)\varphi(z^{-1})$, so $\varphi(yz^{-1})=yz^{-1}$.
    
    On the other hand, suppose $\varphi(yz^{-1})=yz^{-1}$. Then $\varphi(y^{-1})y=\varphi(z^{-1})z$ and thus also $\varphi(x)\varphi(y^{-1})y=\varphi(x)\varphi(z^{-1})z$ for every $x \in G$, which finally yields $x \rhd y = x \rhd z$.
\end{proof}

We can now state the following theorem.

\begin{Theorem}
\label{regular}
    Let $G$ be a group, $\varphi \in \Aut(G)$ and let $A_\varphi(G)$ denote the generalized Alexander quandle of $G$ with respect to $\varphi$. Then the Cayley graph $\Gamma_{A_\varphi(G)}$ is regular, and both the in-degree and the out-degree of each vertex are equal to the index in $G$ of the subgroup of fixed  points of $\varphi$.
\end{Theorem}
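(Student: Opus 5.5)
Out-degree: by Lemma~\ref{fixedpts}, for a fixed vertex $x$ the map $y\mapsto x\rhd y$ satisfies $x\rhd y=x\rhd z$ if and only if $yz^{-1}\in \mathrm{Fix}(\varphi)=\{g\in G:\varphi(g)=g\}$. Here $\mathrm{Fix}(\varphi)$ is a subgroup of $G$, since $\varphi$ is a homomorphism. The condition $yz^{-1}\in \mathrm{Fix}(\varphi)$ says exactly that $y$ and $z$ lie in the same right coset $\mathrm{Fix}(\varphi)z$. Since the graph has no multiple edges, the out-neighbours of $x$ are in bijection with the right cosets of $\mathrm{Fix}(\varphi)$. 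Hence the out-degree of $x$ is $[G:\mathrm{Fix}(\varphi)]$, independently of $x$. (For infinite index the statement is read as an equality of cardinals.) This count includes the loop, which comes from $y=x$.

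In-degree: the in-neighbours of a vertex $w$ are the $x$ such that $\varphi(xy^{-1})y=w$ for some $y$. Solving gives $\varphi(x)=w\,y^{-1}\varphi(y)$, that is, $x=\varphi^{-1}(w)\,\varphi^{-1}(y^{-1})\,y$. So the in-neighbourhood of $w$ is $\varphi^{-1}(w)\cdot S$ with $S=\{\varphi^{-1}(y^{-1})y : y\in G\}$. Left multiplication is a bijection, so the in-degree of $w$ is $|S|$. Similarly, the out-neighbourhood of $x$ is $\varphi(x)\cdot T$ with $T=\{\varphi(y^{-1})y : y\in G\}$, and $|T|=[G:\mathrm{Fix}(\varphi)]$ by the first paragraph.

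It remains to show $|S|=|T|$, and I expect this to be the only step needing care. The plan is to give the same fibre analysis directly for $S$. We have $\varphi^{-1}(y^{-1})y=\varphi^{-1}(z^{-1})z$ if and only if $zy^{-1}=\varphi^{-1}(zy^{-1})$, that is, $zy^{-1}\in\mathrm{Fix}(\varphi^{-1})=\mathrm{Fix}(\varphi)$. This holds if and only if $y$ and $z$ lie in the same right coset of $\mathrm{Fix}(\varphi)$. Therefore $|S|=[G:\mathrm{Fix}(\varphi)]$ as well. Combining the two counts shows that every vertex has in-degree and out-degree equal to $[G:\mathrm{Fix}(\varphi)]$, so $\Gamma_{A_\varphi(G)}$ is regular.
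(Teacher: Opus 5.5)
Your proposal is correct and follows essentially the same route as the paper. The out-degree comes from the fibres of $y\mapsto x\rhd y$ via Lemma~\ref{fixedpts}, and the in-degree from solving $x=\varphi^{-1}(w)\,\varphi^{-1}(y^{-1})\,y$ and repeating the fibre count with $\varphi^{-1}$, using $\mathrm{Fix}(\varphi^{-1})=\mathrm{Fix}(\varphi)$. Your identification of the fibres as right cosets $\mathrm{Fix}(\varphi)z$ is the accurate one; the paper calls them left cosets, which does not affect the count.
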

\begin{proof}
Denote $H=\{g\in G : \varphi(g)=g\}$ and note that $H$ is a subgroup of $G$.
Fix $x \in G$ and consider the map
\[
f_x : G \to G, \qquad f_x(y)=x \rhd y.
\]
By Lemma~\ref{fixedpts}, for $y,z\in G$ we have
\[
f_x(y)=f_x(z)
\quad\Longleftrightarrow\quad
yz^{-1}\in H.
\]
Thus the fibers of $f_x$ are exactly the left cosets of $H$. Hence the number of distinct values of $f_x$ equals the number of such cosets, namely
\[
d^+(x)=\bigl|\{x \rhd y : y\in G\}\bigr|=|\im(f_x)|=[G:H].
\]

Now fix $z \in G$. We compute
\[
d^-(z)
=
\bigl|\{x\in G : \exists\, y\in G \text{ with } x\rhd y=z\}\bigr|.
\]

The equation $x\rhd y=z$ is equivalent to $\varphi(xy^{-1})y=z$.
Multiplying on the right by $y^{-1}$ and applying $\varphi^{-1}$ yields
\[
xy^{-1}=\varphi^{-1}(zy^{-1}),
\]
and hence
\[
x=\varphi^{-1}(zy^{-1})\,y.
\]
Define $g_z(y)=\varphi^{-1}(zy^{-1})\,y$.
Then $x\rhd y=z$ if and only if $x=g_z(y)$, so $d^-(z)=|\im(g_z)|$.
Now observe that $g_z(y)$ differs from $f_{z}(y)=\varphi(zy^{-1})y$ only by replacing $\varphi$ with $\varphi^{-1}$. However, $\varphi(g)=g$ if and only if $\varphi^{-1}(g)=g$, so 
$H=\{g\in G : \varphi^{-1}(g)=g\}$, and therefore
the same fiber-counting argument as above shows that
\[
d^-(z)=|\im(g_z)|=[G:H].
\]

Since both in-degree and out-degree equal $[G:H]$ for every vertex, the graph is regular.
\end{proof}

As the next example shows, the connected components of the (regular) Cayley graph of a generalized Alexander quandle need not be complete graphs.

\begin{Example}
    Let $G=S_4$ and let $\varphi \in \Aut(G)$ be given by $\varphi(g)=(12)g(12)^{-1}$. Since $H=\{g \in S_4 : \varphi(g)=g\}$ is a subgroup of order $4$, Theorem \ref{regular} shows that $\Gamma_{A_\varphi(G)}$ is a regular graph in which every vertex has in-degree and out-degree equal to $6$. Figure \ref{s4pic} depicts the connected component of $\Gamma_{A_\varphi(G)}$ containing the identity permutation. 
\end{Example}

\begin{figure}[h]
\centering
\begin{tikzpicture}[
  >=Stealth,
  vtx/.style={circle,draw,minimum size=8mm,inner sep=1pt,font=\small},
  loopedge/.style={->,thin},
  biedge/.style={<->,thin}
]
\def\r{4.2}

\node[vtx] (v0)  at ( 90:\r) {$\mathrm{id}$};
\node[vtx] (v1)  at ( 60:\r) {$(123)$};
\node[vtx] (v2)  at ( 30:\r) {$(132)$};
\node[vtx] (v3)  at (  0:\r) {$(124)$};
\node[vtx] (v4)  at (-30:\r) {$(142)$};
\node[vtx] (v5)  at (-60:\r) {$(134)$};
\node[vtx] (v6)  at (-90:\r) {$(143)$};
\node[vtx] (v7)  at (-120:\r){$(234)$};
\node[vtx] (v8)  at (-150:\r){$(243)$};
\node[vtx] (v9)  at (180:\r) {$(12)(34)$};
\node[vtx] (v10) at (150:\r) {$(13)(24)$};
\node[vtx] (v11) at (120:\r) {$(14)(23)$};

\foreach \v in {v0,v1,v2,v3,v4,v5,v6,v7,v8,v9,v10,v11}{
  \draw[loopedge] (\v) edge[loop above, looseness=6] (\v);
}

\draw[biedge] (v0) -- (v1);
\draw[biedge] (v0) -- (v2);
\draw[biedge] (v0) -- (v3);
\draw[biedge] (v0) -- (v4);
\draw[biedge] (v0) -- (v9);

\draw[biedge] (v1) -- (v2);
\draw[biedge] (v1) -- (v7);
\draw[biedge] (v1) -- (v8);
\draw[biedge] (v1) -- (v11);

\draw[biedge] (v2) -- (v5);
\draw[biedge] (v2) -- (v6);
\draw[biedge] (v2) -- (v10);

\draw[biedge] (v3) -- (v4);
\draw[biedge] (v3) -- (v7);
\draw[biedge] (v3) -- (v8);
\draw[biedge] (v3) -- (v10);

\draw[biedge] (v4) -- (v5);
\draw[biedge] (v4) -- (v6);
\draw[biedge] (v4) -- (v11);

\draw[biedge] (v5) -- (v7);
\draw[biedge] (v5) -- (v9);
\draw[biedge] (v5) -- (v10);

\draw[biedge] (v6) -- (v8);
\draw[biedge] (v6) -- (v9);
\draw[biedge] (v6) -- (v11);

\draw[biedge] (v7) -- (v9);
\draw[biedge] (v7) -- (v11);

\draw[biedge] (v8) -- (v9);
\draw[biedge] (v8) -- (v10);

\draw[biedge] (v10) -- (v11);

\end{tikzpicture}
\caption{Cayley graph of the connected component containing the identity in the generalized Alexander quandle $(S_4,\varphi)$ with $\varphi(g)=(12)g(12)^{-1}$.}
\label{s4pic}
\end{figure}
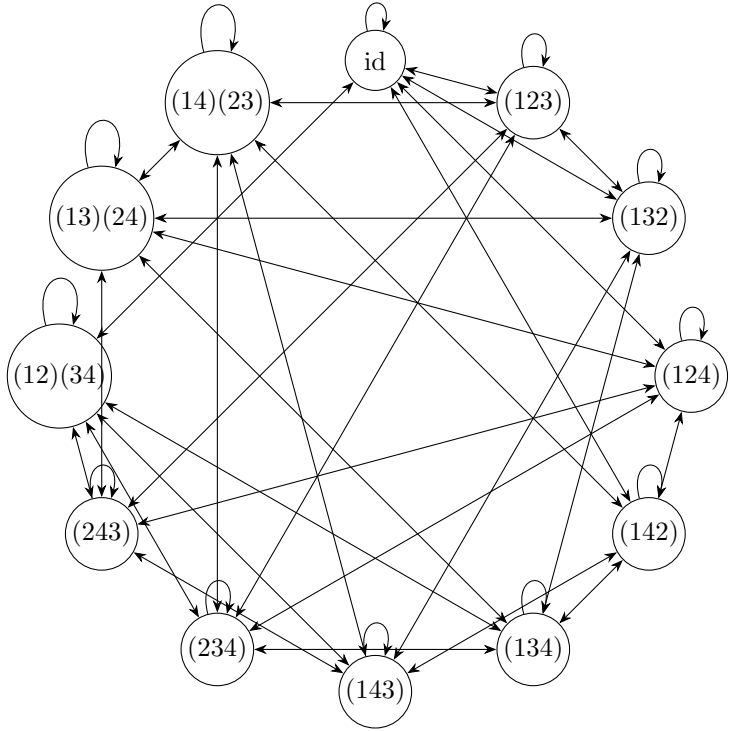

In all the examples considered above, the Cayley graph of a quandle is symmetric: for every edge $ab$, the edge $ba$ also appears. This naturally raises the question of whether the Cayley graph is symmetric for every quandle. The answer is negative, as shown by the following example.

The same example also demonstrates that the diameter of a connected component of the Cayley graph can be arbitrarily large.

\begin{Example}
Let $G = D_m = \langle r, s : r^m = e, s^2 = e, srs = r^{-1} \rangle$ be the dihedral group of order $2m$ and consider $\varphi \in \Aut(G)$ with $\varphi(g)=rgr^{-1}$. The operation in the generalized Alexander quandle $A_\varphi(D_m)$ can be written as
\[
a \rhd b = \varphi(ab^{-1})b = rab^{-1}r^{-1}b .
\]
We describe the edges of the Cayley graph $\Gamma_{A_\varphi(D_m)}$. By definition, there is an edge from $a$ to $a \rhd b$ for each $b \in D_m$. Recall that the elements of the dihedral group $D_m$ can be partitioned into the two subsets
\[
\{e, r, r^2, \ldots, r^{m-1}\}
\quad \text{and} \quad
\{s, rs, r^2s, \ldots, r^{m-1}s\}.
\]
Therefore, we consider the following cases.

{\bf 1.} Let $a = r^i$ and $b = r^j$, where $0 \le i,j \le m-1$. Then we obtain
\[
r^i \rhd r^j = r r^i r^{-j} r^{-1} r^j = r^i.
\]
Thus, the edges of this type are loops at the vertices $r^i$. 

{\bf 2.} Let $a = r^i$ and $b = r^j s$, where $0 \le i,j \le m-1$. Then we have
\[
r^i \rhd r^j s = r r^i s^{-1} r^{-j} r^{-1} r^j s
= r^{i+1} s r^{-1} s
= r^{i+2}.
\]
Hence there is an edge from $r^i$ to $r^{i+2}$.

Therefore, from each vertex of the form $r^i$, $0 \le i \le m-1$, exactly two edges originate: a loop and an edge to the vertex $r^{i+2}$.

If $m$ is even, then the graph $\Gamma_{A_\varphi(D_m)}$ contains two cycles of length $\frac{m}{2}$ with vertices
$ e, r^2, r^4, \ldots, r^{m-2} $ and $ r, r^3, \ldots, r^{m-1}.$
If $m$ is odd, then the graph $\Gamma_{A_\varphi(D_m)}$ contains one cycle of length $m$ with vertices $e, r^2, r^4, \ldots, r^{m-1}, r, r^3, \ldots, r^{m-2}.$

{\bf 3.} Let  $a = r^is$ and $b = r^j $, where $0 \le i,j \le m-1$. Then
\[
r^i s\rhd r^j = r r^i s r^{-j} r^{-1} r^j = r^{i+1} s r^{-1} = r^{i+2}s.
\]
Hence there is an edge from $r^is$ to $r^{i+2}s$.

{\bf 4.} Now let $a = r^is$ and $b = r^js $, where $0 \le i,j \le m-1$. Then
\[
r^i s\rhd r^js = r r^i s s^{-1} r^{-j} r^{-1} r^j s = r^{i+1} r^{-1} s= r^{i}s.
\]
Thus, the edges of this type are loops at the vertices $r^is$. 
Therefore, from each vertex of the form $r^i s$, $0 \le i \le m-1$, exactly two edges originate: a loop and an edge to the vertex $r^{i+2}s$.

If $m$ is even, then the graph $\Gamma_{A_\varphi(D_m)}$ contains two cycles of length $\frac{m}{2}$ with vertices
$ s, r^2s, r^4s, \ldots, r^{m-2}s $ and $ rs, r^3s, \ldots, r^{m-1}s.$
If $m$ is odd, then the graph $\Gamma_{A_\varphi(D_m)}$ contains one cycle of length $m$ with vertices $s, r^2s, r^4s, \ldots, r^{m-1}s, rs, r^3s, \ldots, r^{m-2}s.$

Note that the diameter of the connected components is equal to $\frac{m}{2}-1$ when $m$ is even, and $m-1$, when $m$ is odd. Figures~\ref{dihedraleven} and \ref{dihedralodd} illustrate these graphs.
\end{Example}

\begin{figure}[ht]
\centering
\begin{tikzpicture}[
  vertex/.style={circle,draw,minimum size=16pt,inner sep=1pt},
  arr/.style={-{Stealth[length=2.2mm,width=1.6mm]}, line width=0.5pt},
  dotseg/.style={densely dotted, line width=0.8pt},
  every node/.style={font=\small}
]

\newcommand{\LoopAbove}[1]{%
  \draw[arr]
    ([xshift= 2pt]#1.north)
    .. controls +(0.55,0.75) and +(-0.55,0.75) ..
    ([xshift=-2pt]#1.north);
}

\newcommand{\splitarc}[2]{%
  \path (#1) to[bend left=12] coordinate[pos=0.33] (p) coordinate[pos=0.67] (q) (#2);
  \draw[arr] (#1) to[bend left=12] (p);
  \draw[dotseg] (p) to[bend left=12] node[midway,fill=white,inner sep=1pt] {$\cdots$} (q);
  \draw[arr] (q) to[bend left=12] (#2);
}

\coordinate (C1) at (-4.5,  2.2); 
\coordinate (C2) at (-4.5, -3.0); 
\coordinate (C3) at ( 4.5,  2.2); 
\coordinate (C4) at ( 4.5, -3.0); 
\def\R{1.55}

\node[vertex] (e)   at ($(C1)+(180:\R)$) {$e$};
\node[vertex] (r2)  at ($(C1)+( 90:\R)$) {$r^{2}$};
\node[vertex] (r4)  at ($(C1)+(  0:\R)$) {$r^{4}$};
\node[vertex] (rm2) at ($(C1)+(-90:\R)$) {$r^{m-2}$};

\LoopAbove{e}  \LoopAbove{r2} \LoopAbove{r4} \LoopAbove{rm2}

\draw[arr] (e)  to[bend left=12] (r2);
\draw[arr] (r2) to[bend left=12] (r4);
\splitarc{r4}{rm2}
\draw[arr] (rm2) to[bend left=12] (e);

\node[vertex] (r)   at ($(C2)+(180:\R)$) {$r$};
\node[vertex] (r3)  at ($(C2)+( 90:\R)$) {$r^{3}$};
\node[vertex] (r5)  at ($(C2)+(  0:\R)$) {$r^{5}$};
\node[vertex] (rm1) at ($(C2)+(-90:\R)$) {$r^{m-1}$};

\LoopAbove{r} \LoopAbove{r3} \LoopAbove{r5} \LoopAbove{rm1}

\draw[arr] (r)  to[bend left=12] (r3);
\draw[arr] (r3) to[bend left=12] (r5);
\splitarc{r5}{rm1}
\draw[arr] (rm1) to[bend left=12] (r);

\node[vertex] (s)    at ($(C3)+(180:\R)$) {$s$};
\node[vertex] (r2s)  at ($(C3)+( 90:\R)$) {$r^{2}s$};
\node[vertex] (r4s)  at ($(C3)+(  0:\R)$) {$r^{4}s$};
\node[vertex] (rm2s) at ($(C3)+(-90:\R)$) {$r^{m-2}s$};

\LoopAbove{s} \LoopAbove{r2s} \LoopAbove{r4s} \LoopAbove{rm2s}

\draw[arr] (s)   to[bend left=12] (r2s);
\draw[arr] (r2s) to[bend left=12] (r4s);
\splitarc{r4s}{rm2s}
\draw[arr] (rm2s) to[bend left=12] (s);

\node[vertex] (rs)   at ($(C4)+(180:\R)$) {$rs$};
\node[vertex] (r3s)  at ($(C4)+( 90:\R)$) {$r^{3}s$};
\node[vertex] (r5s)  at ($(C4)+(  0:\R)$) {$r^{5}s$};
\node[vertex] (rm1s) at ($(C4)+(-90:\R)$) {$r^{m-1}s$};

\LoopAbove{rs} \LoopAbove{r3s} \LoopAbove{r5s} \LoopAbove{rm1s}

\draw[arr] (rs)  to[bend left=12] (r3s);
\draw[arr] (r3s) to[bend left=12] (r5s);
\splitarc{r5s}{rm1s}
\draw[arr] (rm1s) to[bend left=12] (rs);

\end{tikzpicture}

\caption{Structure of the Cayley graph of $A_\varphi(D_m)$ for even $m$ with
$\varphi(g)=rgr^{-1}$.}
\label{dihedraleven}
\end{figure}

\begin{figure}[ht]
\centering
\begin{tikzpicture}[
  vertex/.style={circle,draw,minimum size=16pt,inner sep=1pt},
  arr/.style={-{Stealth[length=2.2mm,width=1.6mm]}, line width=0.5pt},
  dotseg/.style={densely dotted, line width=0.8pt},
  every node/.style={font=\small}
]

\newcommand{\LoopAbove}[1]{%
  \draw[arr]
    ([xshift= 2pt]#1.north) 
    .. controls +(0.55,0.75) and +(-0.55,0.75) ..
    ([xshift=-2pt]#1.north); 
}

\newcommand{\LoopRight}[1]{%
  \draw[arr]
    ([yshift=-2pt]#1.east) 
    .. controls +(0.75,-0.55) and +(0.75,0.55) ..
    ([yshift= 2pt]#1.east); 
}

\newcommand{\splitarc}[2]{%
  \path (#1) to[bend left=12] coordinate[pos=0.33] (p) coordinate[pos=0.67] (q) (#2);
  \draw[arr] (#1) to[bend left=12] (p);
  \draw[dotseg] (p) to[bend left=12] node[midway,fill=white,inner sep=1pt] {$\cdots$} (q);
  \draw[arr] (q) to[bend left=12] (#2);
}

\coordinate (C1) at (-4.8, 0.6);  
\coordinate (C2) at ( 4.8, 0.6);  
\def\R{2.15}

\node[vertex] (e)    at ($(C1)+(170:\R)$) {$e$};
\node[vertex] (r2)   at ($(C1)+(110:\R)$) {$r^{2}$};
\node[vertex] (r4)   at ($(C1)+( 50:\R)$) {$r^{4}$};
\node[vertex] (rm1)  at ($(C1)+(-10:\R)$) {$r^{m-1}$};
\node[vertex] (r)    at ($(C1)+(-70:\R)$) {$r$};
\node[vertex] (rm2)  at ($(C1)+(-130:\R)$) {$r^{m-2}$};

\LoopAbove{e}
\LoopAbove{r2}
\LoopAbove{r4}
\LoopRight{rm1}
\LoopAbove{r}
\LoopAbove{rm2}

\draw[arr] (e)  to[bend left=12] (r2);
\draw[arr] (r2) to[bend left=12] (r4);
\splitarc{r4}{rm1}
\draw[arr] (rm1) to[bend left=12] (r);
\splitarc{r}{rm2}
\draw[arr] (rm2) to[bend left=12] (e);

\node[vertex] (s)     at ($(C2)+(170:\R)$) {$s$};
\node[vertex] (r2s)   at ($(C2)+(110:\R)$) {$r^{2}s$};
\node[vertex] (r4s)   at ($(C2)+( 50:\R)$) {$r^{4}s$};
\node[vertex] (rm1s)  at ($(C2)+(-10:\R)$) {$r^{m-1}s$};
\node[vertex] (rs)    at ($(C2)+(-70:\R)$) {$rs$};
\node[vertex] (rm2s)  at ($(C2)+(-130:\R)$) {$r^{m-2}s$};

\LoopAbove{s}
\LoopAbove{r2s}
\LoopAbove{r4s}
\LoopRight{rm1s}
\LoopAbove{rs}
\LoopAbove{rm2s}

\draw[arr] (s)   to[bend left=12] (r2s);
\draw[arr] (r2s) to[bend left=12] (r4s);
\splitarc{r4s}{rm1s}
\draw[arr] (rm1s) to[bend left=12] (rs);
\splitarc{rs}{rm2s}
\draw[arr] (rm2s) to[bend left=12] (s);

\end{tikzpicture}

\caption{Structure of the Cayley graph of $A_\varphi(D_m)$ for odd $m$ with
$\varphi(g)=rgr^{-1}$.}
\label{dihedralodd}
\end{figure}

In the remainder of this paper, we analyze the structure of $A_\varphi(G)$ in the general case. 
We immediately have the following lemma.

\begin{Lemma}
Let $Q$ be a quandle. Then,  for every $b \in Q$, the right translation $R_b : Q \to Q$, defined by $R_b(x) = x \rhd b$, is a quandle automorphism of $Q$.
\end{Lemma}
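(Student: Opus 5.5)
The plan is to check the two defining properties of a quandle automorphism directly from the quandle axioms: $R_b$ must be a bijection of $Q$, and it must preserve the operation $\rhd$.

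Bijectivity is exactly axiom (2), right invertibility, applied to the element $b$. Nothing further is needed here.

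For the homomorphism property, fix $x,y\in Q$. We must show that
\[
R_b(x\rhd y)=R_b(x)\rhd R_b(y),
\]
which reads $(x\rhd y)\rhd b=(x\rhd b)\rhd(y\rhd b)$. This is precisely axiom (3), self-distributivity, with $z=b$.

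Combining the two observations, $R_b$ is a bijective homomorphism. Its inverse $R_b^{-1}$ is then automatically a homomorphism as well: apply $R_b^{-1}$ to the identity $R_b(R_b^{-1}(u)\rhd R_b^{-1}(v))=u\rhd v$. Hence $R_b$ is an automorphism of $Q$. Idempotency is not used.

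There is no genuine obstacle in this argument. The only point requiring care is to apply the self-distributivity identity with the fixed element $b$ in the role of $z$, and to note that bijectivity comes from axiom (2) rather than having to be derived.
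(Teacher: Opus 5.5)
Your proposal is correct and matches the paper's proof: bijectivity comes from right invertibility, and the homomorphism property is self-distributivity with $z=b$. Your remark that $R_b^{-1}$ is automatically a homomorphism is a valid addition that the paper leaves implicit.
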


\begin{proof}
Let $b \in Q$. By the right invertibility axiom of quandles, $R_b$ is a bijection.

Let $x,y \in Q$. Using the self-distributivity axiom, we obtain
\[
R_b(x \rhd y) = (x \rhd y) \rhd b = (x \rhd b) \rhd (y \rhd b) = R_b(x) \rhd R_b(y).
\]
Thus $R_b$ is a quandle homomorphism. Since it is also bijective, it follows that $R_b$ is a quandle automorphism.
\end{proof}

We shall need the following definition.

\begin{Def}
Let $Q$ be a quandle. For $x \in Q$ the \emph{forward orbit} of $x$ is
\[
\mathcal O^{+}(x)
  = \{ R_{y_k}\cdots R_{y_1}(x) : k \ge 0,\; y_1,\ldots,y_k \in Q \}.
\]

Furthermore, the \emph{inner automorphism group} of $Q$ is
\[
\Inn(Q)=\langle R_b : b \in Q\rangle \le \Aut(Q),
\]
the subgroup of $\Aut(Q)$ generated by all right translations.
\end{Def}

The following lemma follows easily from the definition and the finiteness of $Q$.

\begin{Lemma}
\label{inng}
Let $Q$ be a finite quandle and $x \in Q$. Then
\[
\mathcal O^+(x)=\{g(x) : g \in \Inn(Q)\}.
\]
\end{Lemma}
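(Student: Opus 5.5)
We prove the two inclusions separately. The inclusion $\mathcal O^+(x)\subseteq\{g(x): g\in\Inn(Q)\}$ is immediate from the definitions. Every element of $\mathcal O^+(x)$ has the form $R_{y_k}\cdots R_{y_1}(x)$. The composite $R_{y_k}\cdots R_{y_1}$ is a product of generators of $\Inn(Q)$, so it lies in $\Inn(Q)$; for $k=0$ we take the identity.

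For the reverse inclusion, the point is that $\Inn(Q)$ consists of words in the $R_b$ and their inverses $R_b^{-1}$, while the forward orbit only allows positive words. So the task is to express each $R_b^{-1}$ as a positive word in right translations. Here finiteness enters. Since $Q$ is finite, $R_b$ is a permutation of the finite set $Q$, hence an element of the finite symmetric group $\mathrm{Sym}(Q)$. It therefore has finite order $n\ge 1$, and so $R_b^{-1}=R_b^{\,n-1}$, which is a positive word in $R_b$.

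Now take an arbitrary $g\in\Inn(Q)$ and write $g=R_{b_k}^{\varepsilon_k}\cdots R_{b_1}^{\varepsilon_1}$ with each $\varepsilon_i=\pm1$. Replace each factor with $\varepsilon_i=-1$ by the appropriate positive power $R_{b_i}^{\,n_i-1}$. This expresses $g$ as a product $R_{y_m}\cdots R_{y_1}$ with $m\ge 0$ and all $y_j\in Q$, so $g(x)\in\mathcal O^+(x)$.

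No step is a real obstacle. The only point needing care is recognising that finiteness is what converts inverses into positive powers; for infinite $Q$ the inclusion can fail.
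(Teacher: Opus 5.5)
Your proof is correct and follows essentially the same route as the paper: the inclusion $\mathcal O^+(x)\subseteq\{g(x):g\in\Inn(Q)\}$ holds by definition, and the reverse inclusion uses finiteness to write each $R_b^{-1}$ as a positive power of $R_b$, so every element of $\Inn(Q)$ is a positive word in right translations. The edge case $n=1$, where $R_b^{\,n-1}$ is the empty word, is harmless because the definition of $\mathcal O^+(x)$ allows $k\ge 0$.
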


\begin{proof}
By definition, $\mathcal O^+(x)$ consists of all elements obtained from $x$ by compositions of right translations $R_b$. Hence
\[
\mathcal O^+(x) \subseteq \{g(x):g\in\Inn(Q)\}.
\]

Since $Q$ is finite, each $R_b$ is a permutation of a finite set and therefore has finite order. Hence $R_b^{-1}=R_b^{k}$ for some $k>0$. Consequently every element of $\Inn(Q)$ can be written as a composition of positive powers of right translations. Applying such a composition to $x$ yields an element of $\mathcal O^+(x)$.

Therefore $\{g(x):g\in\Inn(Q)\}\subseteq \mathcal O^+(x)$, and the two sets coincide.
\end{proof}

When the automorphism in the definition of the generalized Alexander quandle is inner, the structure becomes more tractable. The following theorem plays a key role in our analysis of this case.

\begin{Theorem}
\label{orbits}
Let $G$ be a finite group and $h\in G$. Let $\varphi:G\to G$ be the inner automorphism given by
$\varphi(g)=hgh^{-1}$ and consider the generalized Alexander quandle $A_\varphi(G)$.
Let
\[
N=\Big\langle\,[h,g]\ \Big|\ g\in G\,\Big\rangle
\]
be the subgroup generated by the commutators
$[h,g]=hgh^{-1}g^{-1}$.
Then 
\[
\mathcal O^{+}(x)=xN \qquad \text{for all } x\in G.
\]
\end{Theorem}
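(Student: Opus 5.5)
The plan is to turn the quandle operation into left multiplication by commutators, and then read off the forward orbit directly. For $x,y\in G$ put $g=xy^{-1}$. Then
\[
x\rhd y=h\,xy^{-1}h^{-1}y=\bigl(h g h^{-1}g^{-1}\bigr)\,g\,y=[h,g]\,x .
\]
As $y$ runs over $G$, the element $g=xy^{-1}$ also runs over all of $G$. So the out-neighbours of $x$ are exactly the elements $[h,g]x$ with $g\in G$. Equivalently, for every $g\in G$ we can pick $y=g^{-1}x$ and obtain $R_y(x)=[h,g]x$.

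Next I will show that $N$ is normal in $G$, so that $Nx=xN$. Expanding the commutator gives the identity
\[
[h,kg]=hkgh^{-1}g^{-1}k^{-1}=[h,k]\cdot k[h,g]k^{-1},
\]
which rearranges to $k[h,g]k^{-1}=[h,k]^{-1}[h,kg]\in N$. Hence the conjugates of the generators of $N$ lie in $N$, and therefore $N\trianglelefteq G$.

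For the inclusion $\mathcal O^+(x)\subseteq Nx$, I will use induction on $k$: each application of some $R_{y_i}$ multiplies the current element on the left by a generator of $N$, so $R_{y_k}\cdots R_{y_1}(x)\in Nx$. For the reverse inclusion, $G$ is finite, so every element of $N$ is a product $c_k\cdots c_1$ of generators $c_i=[h,g_i]$ with no inverses needed, because each $c_i^{-1}$ is a positive power of $c_i$. Starting from $x$, at each stage I choose $y_i=g_i^{-1}z_{i-1}$, where $z_{i-1}$ is the current vertex. This gives $z_i=c_iz_{i-1}$, so $c_k\cdots c_1x\in\mathcal O^+(x)$. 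Therefore $\mathcal O^+(x)=Nx=xN$. Alternatively, Lemma~\ref{inng} could be used for the finiteness step.

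The only nontrivial step is finding the rewriting $x\rhd y=[h,xy^{-1}]\,x$. Once that is in hand, the normality of $N$ is a routine commutator identity and the orbit computation is immediate.
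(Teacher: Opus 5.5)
Your proof is correct, and it takes a genuinely different route from the paper's. The paper writes the operation as a right multiplication, $x\rhd y = x\,[h,x^{-1}]^{-1}[h,y^{-1}]$. That makes the left cosets $xN$ invariant under every $R_y$, so normality of $N$ is never used in the theorem itself; it is proved only afterwards, in Lemma~\ref{normal}, for the corollary. For the reverse inclusion the paper does not realize generators of $N$ by single edges. Instead it builds the auxiliary maps $T_u=R_uR_e^{m-1}$, where $m$ is the order of $R_e$, and checks that $T_u$ is right multiplication by $[h,u^{-1}]$. It deduces that right multiplication by every $n\in N$ lies in the inner automorphism group, and then appeals to Lemma~\ref{inng}.

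Your identity $x\rhd y=[h,xy^{-1}]\,x$ instead puts a single generator on the left. So you land on right cosets $Nx$ and must prove normality; your identity $k[h,g]k^{-1}=[h,k]^{-1}[h,kg]$ is essentially the computation of Lemma~\ref{normal}. In return, every generator is realized by one edge via $y=g^{-1}z$, finiteness is needed only to write elements of $N$ as positive words, and neither $T_u$ nor Lemma~\ref{inng} is needed.

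Your version also gives more information. The out-neighbourhood of $z$ is exactly $\{[h,g]z : g\in G\}$, so $n\mapsto nx$ identifies the induced subgraph on $Nx$ with the Cayley digraph of $N$ with respect to $\{[h,g] : g\in G\}$ (edges $n\to sn$). This makes the corollary's claim that all components are isomorphic immediate. It also recovers the out-degree $[G:C_G(h)]$ of Theorem~\ref{regular} as the number of distinct commutators $[h,g]$.
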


\begin{proof}
Note that we have $[h,g]^{-1}=[g,h]$ for all $g\in G$.
For $x\in G$ we have
\[
x\,[h,x^{-1}]^{-1} \;=\; x\,(x^{-1}hx h^{-1}) \;=\; hxh^{-1}.
\]
Therefore, for all $x,y\in G$,
\[
x\rhd y
= hxy^{-1}h^{-1}y
= (hxh^{-1})\,(hy^{-1}h^{-1}y)
= x\,[h,x^{-1}]^{-1}\,[h,y^{-1}] \in xN.
\]
Thus every right translation, as well as any composition of right translations, maps $xN$ into itself.  Hence
\[
\mathcal O^{+}(x) \subseteq xN.
\]

Let us now prove that we also have  $xN \subseteq \mathcal O^{+}(x)$ for every $x \in G$.
Denote the identity of $G$ by $e$. Then
\[
R_e(x)=x\rhd e=hxe^{-1}h^{-1}e=hxh^{-1},
\]
so $R_e$ is a permutation of the finite set $G$. Let $m\ge 1$ be the (finite) order of $R_e$ in
$\mathrm{Sym}(G)$, so $R_e^m=\mathrm{id}$ and hence $R_e^{-1}=R_e^{m-1}$.
Fix $u\in G$ and consider the map
\[
T_u := R_u\,R_e^{m-1} \in \Inn(A_\varphi(G)).
\]
For any $x\in G$, we have
\[
T_u(x)=R_u(R_e^{-1}(x)).
\]
But $R_e^{-1}(x)=h^{-1}xh$, and therefore
\[
T_u(x)
= (h^{-1}xh)\rhd u
= h(h^{-1}xh)u^{-1}h^{-1}u
= x\,(hu^{-1}h^{-1}u)
= x\,[h,u^{-1}].
\]
So $T_u$ acts as right multiplication by the commutator $[h,u^{-1}]$.
However, $\{u^{-1} : u\in G\}=G$, so the elements $[h,u^{-1}]$
($u\in G$) generate $N$. Consequently, for every $n\in N$ there exist $u_1,\dots,u_r\in G$ and
$\varepsilon_1,\dots,\varepsilon_r\in\{\pm1\}$ such that
\[
n=[h,u_1^{-1}]^{\varepsilon_1}\cdots [h,u_r^{-1}]^{\varepsilon_r}.
\]
Hence $xn=T_{u_r}^{\varepsilon_r}\ldots T_{u_2}^{\varepsilon_2}T_{u_1}^{\varepsilon_1}(x)$. 
Therefore, the right-multiplication map $\rho_n:x\mapsto xn$ lies in $\Inn(A_{\varphi}(G))$, so
$xN \subseteq \{g(x) : g \in \Inn(A_{\varphi}(G))\}=\mathcal O^+(x)$ by Lemma \ref{inng}.
This proves that $\mathcal O^{+}(x)=xN$.
\end{proof}

We have the following lemma.

\begin{Lemma}
\label{normal}
The subgroup $N=\langle [h,g] : g\in G\rangle$ is normal in $G$.
\end{Lemma}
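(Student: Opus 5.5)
It suffices to show that the generating set $S=\{[h,g] : g\in G\}$ of $N$ is closed under conjugation by arbitrary elements of $G$, or failing that, that each conjugate of a generator lies in $N$. Then $aNa^{-1}$ is generated by elements of $N$, so $aNa^{-1}\subseteq N$ for every $a\in G$, and $N$ is normal.

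The key step is a commutator identity. For $a,g\in G$ we have
\[
a[h,g]a^{-1} = a\,hgh^{-1}g^{-1}\,a^{-1}.
\]
We want to express this as a product of commutators of the form $[h,\cdot]^{\pm1}$. Inserting $h^{-1}h$ and regrouping gives
\[
a[h,g]a^{-1} = (ah\,g h^{-1} g^{-1}a^{-1}h^{-1})\,(h a h^{-1}a^{-1})\cdot\text{(correction)}.
\]
It is cleaner to use the standard identity $[h,xy]=[h,x]\,x[h,y]x^{-1}$, which can be checked directly:
\[
[h,x]\,x[h,y]x^{-1}=hxh^{-1}x^{-1}\,x\,hyh^{-1}y^{-1}x^{-1}=hxyh^{-1}y^{-1}x^{-1}=[h,xy].
\]
Taking $x=a$ and $y=g$ yields
\[
a[h,g]a^{-1}=[h,a]^{-1}[h,ag]\in N.
\]
This is the whole argument.

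The remaining steps are routine. Every element of $N$ is a product $n=s_1^{\varepsilon_1}\cdots s_r^{\varepsilon_r}$ with $s_i\in S$. Then $ana^{-1}=\prod (as_ia^{-1})^{\varepsilon_i}$, and each factor lies in $N$ by the identity above. Hence $aNa^{-1}\subseteq N$ for all $a$, and replacing $a$ by $a^{-1}$ gives equality. Finiteness of $G$ is not needed.

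The only potential obstacle is finding the correct form of the identity; I would verify it by direct expansion as shown above.
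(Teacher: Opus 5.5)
Your proof is correct and is essentially the paper's argument: the paper conjugates $[g,h]=[h,g]^{-1}$ and obtains $x[g,h]x^{-1}=[h,xg]^{-1}[h,x]$, which is just the inverse of your identity $a[h,g]a^{-1}=[h,a]^{-1}[h,ag]$, followed by the same observation that conjugates of generators lying in $N$ suffice. You should delete the abandoned display with the ``(correction)'' factor, since it is not a valid equation and plays no role in the argument.
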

\begin{proof}
 Fix $x,g\in G$. Then
\[
x[g,h]x^{-1}=xghg^{-1}h^{-1}x^{-1}=(xg)h(xg)^{-1}\,(xh^{-1}x^{-1}).
\]
Insert $h^{-1}h$ between the last two factors to obtain
\[
x[g,h]x^{-1}=[xg,h] \, [h,x]=[h,xg]^{-1} \, [h,x] \in N.
\]
Since the elements $[g,h]=[h,g]^{-1}$ generate $N$, it follows that
$N\trianglelefteq G$.
\end{proof}

We now obtain the following corollary.

\begin{Corollary}
Let $G$ be a finite group and $h\in G$. Let $\varphi(g)=hgh^{-1}$ and consider the generalized Alexander quandle $A_\varphi(G)$. Let $N=\langle [h,g] : g\in G\rangle$.
Then the connected components of $\Gamma_{A_\varphi(G)}$ are precisely the cosets of $N$ in $G$. In particular, $\Gamma_{A_\varphi(G)}$ has $|G:N|$ connected components, each of size $|N|$.
Moreover, the induced subgraphs on the connected components are all isomorphic.
\end{Corollary}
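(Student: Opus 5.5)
The plan is to derive everything from Theorem~\ref{orbits}. Recall that connectivity here means strong connectivity: every ordered pair in a component must be joined by a directed path. Vertex $v$ is reachable from $u$ by a directed path exactly when $v\in\mathcal O^+(u)$. Indeed, a path $u\to u\rhd y_1\to (u\rhd y_1)\rhd y_2\to\cdots$ is precisely a composition $R_{y_k}\cdots R_{y_1}(u)$, and the empty composition covers $k=0$. By Theorem~\ref{orbits}, $\mathcal O^+(u)=uN$. Hence $v$ is reachable from $u$ if and only if $v\in uN$.

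Since $N$ is a subgroup, $v\in uN$ is equivalent to $uN=vN$, which is a symmetric condition. So $v$ is reachable from $u$ if and only if $u$ is reachable from $v$. Mutual reachability is therefore exactly the relation of lying in the same left coset of $N$. The strongly connected components are thus precisely the left cosets $xN$. By Lemma~\ref{normal} these coincide with the right cosets, so no left/right ambiguity arises in saying ``cosets of $N$''. The second claim follows at once: there are $|G:N|$ components, each of size $|N|$.

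For the isomorphism claim, I would exhibit graph automorphisms of $\Gamma_{A_\varphi(G)}$ that permute the cosets transitively. The natural candidate is left multiplication $\lambda_a(x)=ax$. The key step is checking that it is a quandle endomorphism, or at least maps edges to edges:
\[
\lambda_a(x)\rhd \lambda_a(y)=h(ax)(ay)^{-1}h^{-1}(ay).
\]
This should be compared with
\[
a(x\rhd y)=ahxy^{-1}h^{-1}y.
\]
These expressions do not agree in general. I expect this to be the main obstacle.

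To get around it, I would instead use right multiplication by $n$, or better a map tailored to $\varphi$. Note that $\varphi$ itself is a quandle automorphism, since $\varphi(x)\rhd\varphi(y)=\varphi(x\rhd y)$. However, $\varphi$ fixes cosets of $N$ only up to $xN\mapsto hxh^{-1}N$, which again maps each coset into itself, so it does not help either.

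The cleaner route is to compute edges directly. There is an edge $x\to z$ if and only if $z=hxy^{-1}h^{-1}y$ for some $y$. Equivalently, $x^{-1}h^{-1}z\in S:=\{y^{-1}h^{-1}y : y\in G\}$, the conjugacy class of $h^{-1}$; in other words $z\in hxS$. Now take $\psi(x)=ax$ with $a$ arbitrary. We have $\psi(z)\in h\psi(x)S$ if and only if $az\in haxS$, that is, $z\in a^{-1}haxS$. This holds for all $x$ precisely when $a$ commutes with $h$, which fails in general. So instead I would use $\psi(x)=xa$. We have $xa\to za$ if and only if $za\in hxaS$, i.e.\ $z\in hx\,(aSa^{-1})=hxS$, because $S$ is a conjugacy class. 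Hence right multiplication $x\mapsto xa$ is a graph automorphism for every $a\in G$. It maps the coset $xN=Nx$ onto $Nxa$, using normality from Lemma~\ref{normal}. Right multiplications act transitively on the cosets, so the induced subgraphs on all components are isomorphic.
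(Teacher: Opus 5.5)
Your proof is correct and follows the paper's route. The components come from Theorem~\ref{orbits}, and you usefully make explicit that reachability, $v\in uN \iff uN=vN$, is symmetric, so the strongly connected components are exactly the cosets. The isomorphism is right multiplication $x\mapsto xa$, which is the paper's $\tau_g$; your description of edges via $z\in hxS$, with $S$ the conjugacy class of $h^{-1}$, gives edge preservation in both directions at once, whereas the paper only checks one direction. One aside is overstated, though it is never used: left multiplication can be a graph automorphism even when $a$ does not commute with $h$. For example, for $G=S_3$ and $h=(12)$, edges join exactly the permutations of equal parity, so every $x\mapsto ax$ preserves edges.
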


\begin{proof}
By Theorem \ref{orbits}, the forward orbit of $x$ is $xN$.  Hence the connected components of $\Gamma_{A_\varphi(G)}$ are precisely the sets $xN$, and their number is $|G:N|$.

Fix $g\in G$ and define $\tau_g:N\to gN$ by $\tau_g(x)=xg$. Observe that since $N$ is normal in $G$ by Lemma \ref{normal}, we have $Ng=gN$, so indeed we have $\tau_g(x) \in gN$. It is clear that $\tau_g$ is a bijection.
Suppose that $xy$ is an edge in the induced subgraph on $N$. Therefore, there exists $z \in G$ such that $y=x \rhd z=hxz^{-1}h^{-1}z$. But then, $\tau_g(y)=yg=hxz^{-1}h^{-1}zg=h\tau_g(x)(zg)^{-1}h^{-1}(zg)$. So, denote $w=zg$ and observe that $\tau_g(y)=\tau_g(x) \rhd w$, therefore we have an edge from $\tau_g(x)$ to $\tau_g(y)$ in the induced subgraph on $gN$. 
So, we have proved that $\tau_g$ is a graph isomorphism between the induced subgraphs on $N$ and $gN$.
\end{proof}

\bigskip

{\bf Statements and Declarations} \\

The authors state that there are no competing interests. 

\bigskip

\bigskip

\bibliographystyle{amsplain}
\bibliography{biblio}

@article{Joyce82,
    AUTHOR = {Joyce, David},
     TITLE = {A classifying invariant of knots, the knot quandle},
   JOURNAL = {J. Pure Appl. Algebra},
  FJOURNAL = {Journal of Pure and Applied Algebra},
    VOLUME = {23},
      YEAR = {1982},
    NUMBER = {1},
     PAGES = {37--65},
      ISSN = {0022-4049,1873-1376},
   MRCLASS = {57M25 (20F29 20N05 53C35)},
  MRNUMBER = {638121},
MRREVIEWER = {Mark\ E.\ Kidwell},
       DOI = {10.1016/0022-4049(82)90077-9},
       URL = {https://doi.org/10.1016/0022-4049(82)90077-9},
}

@article{Matveev82,
    AUTHOR = {Matveev, S. V.},
     TITLE = {Distributive groupoids in knot theory},
   JOURNAL = {Mat. Sb. (N.S.)},
  FJOURNAL = {Matematicheski\u i\ Sbornik. Novaya Seriya},
    VOLUME = {119(161)},
      YEAR = {1982},
    NUMBER = {1},
     PAGES = {78--88, 160},
      ISSN = {0368-8666},
   MRCLASS = {57M25 (20L15)},
  MRNUMBER = {672410},
MRREVIEWER = {Jonathan\ A.\ Hillman},
}

@article{CarterEtAl99,
  author  = {Carter, J. Scott and Jelsovsky, Daniel and Kamada, Seiichi and Langford, Laurel and Saito, Masahico},
  title   = {Quandle cohomology and state-sum invariants of knotted curves and surfaces},
  journal = {Transactions of the American Mathematical Society},
  volume  = {355},
  number  = {10},
  pages   = {3947--3989},
  year    = {2003}
}

@book {CarterEtAl03,
    AUTHOR = {Carter, Scott and Kamada, Seiichi and Saito, Masahico},
     TITLE = {Surfaces in 4-space},
    SERIES = {Encyclopaedia of Mathematical Sciences},
    VOLUME = {142},
      NOTE = {Low-Dimensional Topology, III},
 PUBLISHER = {Springer-Verlag, Berlin},
      YEAR = {2004},
     PAGES = {xiv+213},
      ISBN = {3-540-21040-7},
   MRCLASS = {57Q45 (57M25 57R40)},
  MRNUMBER = {2060067},
MRREVIEWER = {Sergej\ V.\ Matveev},
       DOI = {10.1007/978-3-662-10162-9},
       URL = {https://doi.org/10.1007/978-3-662-10162-9},
}

@incollection {Kamanda17,
    AUTHOR = {Kamada, Seiichi},
     TITLE = {Knot invariants derived from quandles and racks},
 BOOKTITLE = {Invariants of knots and 3-manifolds ({K}yoto, 2001)},
    SERIES = {Geom. Topol. Monogr.},
    VOLUME = {4},
     PAGES = {103--117},
 PUBLISHER = {Geom. Topol. Publ., Coventry},
      YEAR = {2002},
   MRCLASS = {57M27},
  MRNUMBER = {2002606},
       DOI = {10.2140/gtm.2002.4.103},
       URL = {https://doi.org/10.2140/gtm.2002.4.103},
}

@article{Cayley78,
  author  = {Cayley, Arthur},
  title   = {Desiderata and Suggestions: No. 2. The Theory of Groups: Graphical Representation},
  journal = {American Journal of Mathematics},
  volume  = {1},
  number  = {2},
  pages   = {174--176},
  year    = {1878}
}

@book{GodsilRoyle01,
    AUTHOR = {Godsil, Chris and Royle, Gordon},
     TITLE = {Algebraic graph theory},
    SERIES = {Graduate Texts in Mathematics},
    VOLUME = {207},
 PUBLISHER = {Springer-Verlag, New York},
      YEAR = {2001},
     PAGES = {xx+439},
      ISBN = {0-387-95241-1; 0-387-95220-9},
   MRCLASS = {05-02 (05C50 05E30)},
  MRNUMBER = {1829620},
MRREVIEWER = {Robin\ J.\ Wilson},
       DOI = {10.1007/978-1-4613-0163-9},
       URL = {https://doi.org/10.1007/978-1-4613-0163-9},
}

@book{deLaHarpe00,
    AUTHOR = {de la Harpe, Pierre},
     TITLE = {Topics in geometric group theory},
    SERIES = {Chicago Lectures in Mathematics},
 PUBLISHER = {University of Chicago Press, Chicago, IL},
      YEAR = {2000},
     PAGES = {vi+310},
      ISBN = {0-226-31719-6; 0-226-31721-8},
   MRCLASS = {20F65 (20F69 57M07)},
  MRNUMBER = {1786869},
MRREVIEWER = {Lee\ Mosher},
}

@book{ElhamdadiNelson15,
    AUTHOR = {Elhamdadi, Mohamed and Nelson, Sam},
     TITLE = {Quandles---an introduction to the algebra of knots},
    SERIES = {Student Mathematical Library},
    VOLUME = {74},
 PUBLISHER = {American Mathematical Society, Providence, RI},
      YEAR = {2015},
     PAGES = {x+245},
      ISBN = {978-1-4704-2213-4},
   MRCLASS = {57M27 (57M25 57Q45)},
  MRNUMBER = {3379534},
MRREVIEWER = {Frederick\ Norwood},
       DOI = {10.1090/stml/074},
       URL = {https://doi.org/10.1090/stml/074},
}

@book{Diestel,
 author = {Diestel, Reinhard},
 title = {Graph theory},
 edition = {3rd revised and updated ed.},
 fseries = {Graduate Texts in Mathematics},
 series = {Grad. Texts Math.},
 issn = {0072-5285},
 volume = {173},
 isbn = {3-540-26182-6},
 year = {2005},
 publisher = {Berlin: Springer},
 }

@article{LucTa,
  author  = {Ta, Luc},
  title   = {Generalized Cayley graphs of racks and right quasigroups},
  journal = {arXiv:2506.04437},
  year    = {2025}
}

\bigskip

\end{document}